\newtheorem{theorem}{Theorem}[section]
\newtheorem{lemma}[theorem]{Lemma}
\newtheorem{proposition}[theorem]{Proposition}
\newtheorem{corollary}[theorem]{Corollary}
\theoremstyle{definition}
\newtheorem{definition}[theorem]{Definition}
\numberwithin{equation}{section}
\begin{document}
\noindent {\footnotesize\tiny}\\[1.00in]

\textcolor[rgb]{0.00,0.00,1.00}{}
\title[]{Proximal Point Algorithm for Quasi-convex Minimization Problems in metric spaces}
\maketitle
\begin{center}
	{\sf Hadi Khat\texttt{}ibzadeh\footnote{E-mail: $^{1}$hkhatibzadeh@znu.ac.ir, $^{2}$mohebbi@znu.ac.ir.} and Vahid Mohebbi$^2$}\\
	{\footnotesize{\it $^{1,2}$ Department of Mathematics, University
			of Zanjan, P. O. Box 45195-313, Zanjan, Iran.}}
\end{center}
\begin{abstract}
In this paper, the proximal point algorithm for quasi-convex minimization problem in nonpositive curvature metric spaces is studied. We prove $\Delta$-convergence of the generated sequence to a critical point (which is defined in the text) of an objective convex, proper and lower semicontinuous function with at least a minimum point as well as some strong convergence results to a minimum point with some additional conditions. The results extend the recent results of the proximal point algorithm in Hadamard manifolds and CAT(0) spaces.
\end{abstract}

\section{Introduction}
 Convex functions and their generalizations (pseudo-convex and quasi-convex functions) have founded several applications in optimization and economics because of their nice minimization properties. These concepts are traditionally
defined in linear spaces. But they are extendable in some geodesical spaces like Riemannian manifolds and nonpositive curvature metric spaces by means of Alexandroff, which are nonlinear version of Hilbert spaces.

In some of constrained minimization problems the objective function may not be convex or even quasi-convex and the constraint set is not a linear space, but the objective function may be convex or quasi-convex along geodesics of the constraint set as a submanifold of the linear space. Then the non-convex and constrained minimization problem can be change to a non-constrained and (quasi)convex minimization problem. In \cite{ah-kh2} the reader can see an example.

A popular method in convex minimization is the proximal point algorithm which introduced and improved by Martinet \cite{mar}, Rockafellar \cite{ro} and the others. As an advantage of this method, it is extendable to nonlinear spaces, like Riemannian manifolds and metric spaces of nonpositive curvature. Ferreira and Oliveira \cite{fer-oli}, Li, Lopez and Martin-Marquez \cite{llm} and Ahmadi and the first author \cite{ah-kh1} studied the proximal point algorithm in Hadamard manifolds. Bacak \cite{bac1} studied the proximal point method in nonpositive curvature metric spaces. He proved $\Delta$-convergence of the algorithm to a minimum point of the convex function. The proximal point algorithm also has been used for minimization of a quasi-convex function in Hadamard manifolds in \cite{bfo, pqo, tzh, tx}. The main goal of this paper is to study the proximal point algorithm for quasi-convex functions in Hadamard spaces framework, which extends the previous results in the literature. In Section 2, we introduce Hadamard spaces and quasi-linearization of Berg and Nikolaev as well as $\Delta$-convergence introduced by Lim \cite{lim} in Hadamard spaces as a similar argument of weak convergence in Hilbert spaces. In the sequel we introduce some necessary definitions of generalized convex functions and their basic properties. Section 3 is devoted to the proximal point algorithm and $\Delta$-convergence of its generated sequence to a critical point of a quasi-convex function with at least a minimum point. We also establish the strong convergence with additional assumptions.

\section{Preliminaries and Basic Facts}

Let $(X,d)$ be a metric space. A geodesic from $x$ to $y$ is a map
$\gamma$ from the closed interval $[0,d(x,y)] \subset \mathbb{R}$
to $X$ such that $\gamma(0) = x,\ \gamma(d(x,y)) = y$ and
$d(\gamma(t), \gamma(t')) = |t - t'| $ for all $t, t'\in [0,d(x,y)]$.
The space $(X, d)$ is said to be a
geodesic space if every two points of X are joined by a geodesic. The metric segment $[x,y]$ contains the images of all geodesics, which connect $x$ to $y$. $X$ is called unique geodesic iff $[x,y]$ contains only one geodesic.\\
Let $X$ be a unique geodesic metric space. For each $x, y\in X$ and for each $t\in [0, 1]$, there exists a unique point $z\in [x, y]$ such that
$d(x, z) = td(x, y)$ and $d(y, z) = (1 - t)d(x, y)$. We will use the notation $(1 - t)x \oplus ty$ for the unique point $z$ satisfying the above statement.

In a unique geodesic metric space $X$, a set $A\subset X$ is called convex iff for each $x,y\in A$, $[x,y]\subset A$. A function $f:X\rightarrow]-\infty,+\infty]$ is called\\ (i) convex
iff
\begin{center}
 $f(\lambda x\oplus(1-\lambda)y)\leq\lambda f(x)+(1-\lambda)f(y),\ \ \forall x,y\in X\ \text{and} \ \forall \  0\leq\lambda\leq1$
\end{center}
(ii) strictly convex iff
\begin{center}
$f(\lambda x+(1-\lambda)y)<\lambda f(x)+(1-\lambda)f(y),\ \ \forall x,y\in X, x\neq y \ \text{and}\ \ \forall \ 0\leq\lambda\leq1$
\end{center}
(iii) $\alpha$-weakly convex for some $\alpha>0$ iff
\begin{center}
$f(\lambda x\oplus(1-\lambda)y)\leq\lambda f(x)+(1-\lambda)f(y)+\alpha\lambda(1-\lambda)d^2(x,y),\ \ \forall x,y\in~X\ \text{and} \ \forall \ 0\leq\lambda\leq1$
\end{center}
(iv) $\alpha$-strongly convex for some $\alpha>0$ iff
\begin{center}
$f(\lambda x\oplus(1-\lambda)y)\leq\lambda f(x)+(1-\lambda)f(y)-\alpha\lambda(1-\lambda)d^2(x,y),\ \ \forall x,y\in~X\ \text{and} \ \forall \ 0\leq\lambda\leq1$
\end{center}
(v) quasi convex iff
\begin{center}
$f(\lambda x\oplus(1-\lambda)y)\leq\max\{ f(x),f(y)\},\ \ \forall x,y\in X\ \text{and} \ \forall \ 0\leq\lambda\leq1$
\end{center} 
equivalently, for each $r\in \mathbb{R}$, the sub-level set $L^f_r:=\{x\in X:\ \ f(x)\leq r\}$ is a convex subset of $X$.\\
(vi) $\alpha$-strongly quasi-convex for some $\alpha>0$ iff
\begin{center}
$f(\lambda x\oplus(1-\lambda)y)\leq\max\{ f(x),f(y)\}-\alpha\lambda(1-\lambda)d^2(x,y),\ \ \forall x,y\in~X\ \text{and} \ \forall \ 0~\leq~\lambda\leq1$
\end{center}
for each $x,y\in X$ and $0<\lambda<1$.\\
(vii) pseudo-convex  iff\\
$f(y)>f(x)$ implies that there exist $\beta(x,y)>0$  and $0<\delta(x,y)\leq1$  such that $f(y)-f(tx\oplus(1-t)y)\geq t\beta(x,y)$,  $\forall t\in(0,\delta(x,y))$.\\
A unique geodesic space $X$ is called CAT(0) space if for all $x\in X$ the mapping $d^2(x,\cdot):X\rightarrow\mathbb{R}$ is 1-strongly convex. A complete CAT(0) space is called a Hadamard space.

Berg and Nikolaev in \cite{ber-nik1, ber-nik2} introduced the concept of quasi-linearization
along these lines. Let us formally denote a pair $(a, b) \in X \times  X$ by $\overset {\rightarrow}{ab}$ and call it a vector. Then quasi-linearization is defined as a map $\langle\cdot,\cdot\rangle : (X \times X)\times(X \times X)\rightarrow \mathbb{R}$ defined by
\begin{center}
$\langle\overset {\rightarrow}{ab},\overset {\rightarrow}{cd}\rangle
=\frac{1}{2}\{d^2(a, d) + d^2(b, c) - d^2(a, c) - d^2(b, d)\}  \   \   \    \ (a, b, c, d \in X).$
\end{center}
It is easily seen that
$\langle\overset {\rightarrow}{ab},\overset {\rightarrow}{cd}\rangle
=
\langle\overset {\rightarrow}{cd},\overset {\rightarrow}{ab}\rangle$,
$\langle\overset {\rightarrow}{ab},\overset {\rightarrow}{cd}\rangle
= -\langle\overset {\rightarrow}{ba},\overset {\rightarrow}{cd}\rangle$
and $\langle\overset {\rightarrow}{ax},\overset {\rightarrow}{cd}\rangle
+
\langle\overset {\rightarrow}{xb},\overset {\rightarrow}{cd}\rangle
=
\langle\overset {\rightarrow}{ab},\overset {\rightarrow}{cd}\rangle$
for all $a, b, c, d, x \in X$. We say that X satisfies the Cauchy-Schwarz inequality if
$\langle\overset {\rightarrow}{ab},\overset {\rightarrow}{cd}\rangle \leq d(a, b)d(c, d)$
for all $a, b, c, d \in X$. It is known (Corollary 3 of \cite{ber-nik2}) that a geodesically connected metric space
is a CAT(0) space if and only if it satisfies the Cauchy-Schwarz inequality.

A kind of convergence was introduced by Lim \cite{lim} in order to extend weak convergence in CAT(0) setting. Let $(X,d)$ be a Hadamard space, $\{x_n\}$ be a bounded sequence
in $X$ and $x\in X$. Let $r(x,\{x_n\})=\limsup d(x,x_n)$. The
asymptotic radius of $\{x_n\}$ is given by $r(\{x_n\})= \inf
\{r(x,\{x_n\})| x\in X \}$ and the asymptotic center of
$\{x_n\}$ is the set $A(\{x_n\})=\{x \in X|
r(x,\{x_n\})=r(\{x_n\})\}$. It is known that in a Hadamard space,
$A(\{x_n\})$ consists exactly one point.

\begin{definition}
	A sequence $\{x_n\}$ in a Hadamard space $(X,d)$
	$\triangle$-converges to $x\in X$
	if $A(\{x_{n_k}\})=\{x\}$, for each subsequence $\{x_{n_k}\}$ of $\{x_n\}$.\\
	We denote $\triangle$-convergence in $X$ by
	$\overset{\triangle}{\longrightarrow}$
	and the metric convergence by $\rightarrow$.
\end{definition}
It is well-known that every bounded sequence in a CAT(0) space has a $\Delta$-convergent subsequence (see \cite{kir}).

\begin{definition}
	Let $f:X\rightarrow]-\infty,+\infty]$. The domain of $f$ is defined by $D(f):=\{x\in X:\ f(x)<+\infty\}$. $f$ is proper iff $D(f)\neq\varnothing$.
\end{definition}

\begin{definition}
	A function $f:X\rightarrow]-\infty,+\infty]$ is called ($\Delta$-)lower semicontinuous (shortly, lsc) at $x\in D(f)$ iff $$\liminf_{n\rightarrow \infty}f(y_n)\geq f(x)$$
	for each sequence $y_n\rightarrow x$ ($y_n\overset{\triangle}{\longrightarrow}x$) as $n\rightarrow+\infty$. $f$ is called ($\Delta$-)lower semicontinuous iff it is ($\Delta$-)lower semicontinuous in each point of its domain. It is easy to see that every lower semicontinuous and quasi-convex function is $\Delta$-lower semicontinuous.
\end{definition}

\begin{definition}\cite{bac2}\label{scalar subdifferential}
	Let $f:X\rightarrow]-\infty,+\infty]$. For each $x\in D(f)$ the slop of $f$ at $x$ or scaler subdifferential of $f$ at $x$ is defined as follows
	$$|\partial f|(x):=\limsup_{y\rightarrow x}\frac{max\{f(x)-f(y), \ 0\}}{d(x,y)}$$
	$D(|\partial f|):=\{x\in X:\ |\partial f|(x)<+\infty\}$ is called the domain of $|\partial f|$. Obviously $D(|\partial f|)\subset D(f)$.
\end{definition}
 A point $x\in D(f)$ with $|\partial f|(x)=0$ is called a critical point of $f$. Obviously each local minimum of $f$ is a critical point. The converse is true if $f$ is pseudo-convex.
 \begin{proposition}
 	For each proper and pseudo-convex function $f:X\rightarrow]-\infty,+\infty]$, every critical point is a minimum point.
 \end{proposition}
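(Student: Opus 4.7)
The plan is to argue by contradiction. Suppose $x\in D(f)$ is a critical point of $f$ (so $|\partial f|(x)=0$) but $x$ is not a minimum point. Then there exists some $y\in X$ with $f(y)<f(x)$, i.e.\ $f(x)>f(y)$. The strategy is to use the pseudo-convexity assumption to produce a one-parameter family of points approaching $x$ along which $f$ decreases at a quantitatively controlled rate, and then read off a strictly positive value of $|\partial f|(x)$, contradicting criticality.

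The key step is to invoke the definition of pseudo-convexity with the roles of $x$ and $y$ interchanged: since $f(x)>f(y)$, there exist constants $\beta=\beta(y,x)>0$ and $\delta=\delta(y,x)\in(0,1]$ such that
$$f(x)-f\bigl(ty\oplus(1-t)x\bigr)\geq t\beta \quad\text{for all } t\in(0,\delta).$$
Next I would set $z_t:=ty\oplus(1-t)x$ and observe from the definition of the geodesic notation recalled in Section~2 that $d(x,z_t)=t\,d(x,y)$. In particular $z_t\to x$ as $t\to 0^+$, and $z_t\neq x$ whenever $x\neq y$ (which is forced by $f(x)\neq f(y)$).

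Combining these two facts, for every sufficiently small $t>0$,
$$\frac{\max\{f(x)-f(z_t),\,0\}}{d(x,z_t)}\geq \frac{t\beta}{t\,d(x,y)}=\frac{\beta}{d(x,y)}>0.$$
Taking the $\limsup$ as $t\to 0^+$ (which is a restricted $\limsup$ over points $y'\to x$ in the sense of Definition~\ref{scalar subdifferential}) yields $|\partial f|(x)\geq \beta/d(x,y)>0$, contradicting $|\partial f|(x)=0$. Hence no such $y$ exists and $x$ is a minimum point.

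There is no real obstacle here: the only subtlety is making sure the pseudo-convexity hypothesis is applied in the correct direction (from the larger value $f(x)$ toward the smaller value $f(y)$) and verifying the distance identity $d(x,z_t)=t\,d(x,y)$ so that the $t$ cancels cleanly in the difference quotient. The rest is a direct chain of inequalities.
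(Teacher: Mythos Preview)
Your proof is correct and follows essentially the same route as the paper's: argue by contradiction, apply pseudo-convexity to obtain $f(x)-f(ty\oplus(1-t)x)\geq t\beta$, divide by $d(x,z_t)=t\,d(x,y)$, and let $t\to 0$ to force $|\partial f|(x)\geq \beta/d(x,y)>0$. If anything you are slightly more careful than the paper in writing $\beta(y,x)$ rather than $\beta(x,y)$ and in explicitly checking $x\neq y$ and $d(x,z_t)=t\,d(x,y)$.
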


 \begin{proof}
 	Suppose $x$ is a critical point which is not a minimum point. Then $f(x)>f(y)$ for some $y\in X$. By the definition of pseudo-convexity, there exist $\beta(x,y)>0$ and $0<\delta(x,y)\leq1$ such that $$f(x)-f(ty\oplus(1-t)x)\geq t\beta(x,y),\ \ \ \forall t\in (0,\delta(x,y))$$
 	Therefore
 	$$|\partial f|(x)=\limsup_{z\rightarrow x}\frac{\max\{f(x)-f(z),0\}}{d(z,x)}\geq \limsup_{t\rightarrow0}\frac{f(x)-f(ty\oplus(1-t)x)}{td(x,y)}\geq\frac{\beta(x,y)}{d(x,y)}>0,$$ which is a contradiction.
 \end{proof}
\begin{definition}
	Let $X$ be a metric space and $x_0\in X$. A function $f:X\rightarrow]-\infty,+\infty]$ is called coercive iff $$\lim_{d(x,x_0)\rightarrow+\infty}f(x)=+\infty.$$ Obviously coercivity is not dependent to point $x_0$ and every $\alpha$-strongly convex is coercive.
\end{definition}
\begin{definition}
	A metric space $X$ has property (R), iff the intersection of any decreasing family of nonempty, convex and closed sets is nonempty.
\end{definition}
\begin{proposition}\cite{bac2}\label{coercivity minimization}
	Let $f:X\rightarrow]-\infty,+\infty]$ be a quasi-convex and lsc function with $D(f)\neq\varnothing$, where $X$ is a geodesic metric space with property (R). If $f$ is coercive and bounded from blow,
	then $f$ has at least a minimum point.
\end{proposition}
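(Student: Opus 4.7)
The plan is to realize the minimum as a point in the intersection of a nested family of sublevel sets, and to obtain nonemptiness of that intersection from property (R).

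First I would set $m := \inf_{x \in X} f(x)$. Since $f$ is bounded from below and proper, $m \in \mathbb{R}$. Choose any sequence $r_n \downarrow m$ with $r_n > m$ for all $n$, and consider the sublevel sets $L^f_{r_n} = \{x \in X : f(x) \leq r_n\}$. I would verify in turn that these sets satisfy the hypotheses of property (R): (i) each $L^f_{r_n}$ is nonempty by the definition of the infimum; (ii) each $L^f_{r_n}$ is convex because $f$ is quasi-convex (this was recorded right after the definition of quasi-convexity); (iii) each $L^f_{r_n}$ is closed because $f$ is lsc, so that the complement $\{f > r_n\}$ is open by the standard sequential argument using $\liminf_{k} f(y_k) \geq f(y)$; and (iv) the family is decreasing since $r_n$ is decreasing. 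Coercivity enters to guarantee that the sublevel sets are bounded, which is the form in which property (R) is typically applied in this setting; without coercivity the sets $L^f_{r_n}$ could fail to be bounded.

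Applying property (R) then yields a point $x^* \in \bigcap_{n} L^f_{r_n}$. By construction $f(x^*) \leq r_n$ for every $n$, so letting $n \to \infty$ gives $f(x^*) \leq m$. The reverse inequality is immediate from the definition of $m$, hence $f(x^*) = m$ and $x^*$ is a minimum point of $f$.

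There is no serious obstacle here; the whole content of the argument is the correct packaging of the hypotheses so that property (R) can be invoked. The only point one needs to be slightly careful about is that lsc in the ordinary metric sense suffices for closedness of the sublevel sets (even without appealing to $\Delta$-lsc), and that coercivity, although not strictly needed to form the nested family, is what keeps the argument in the regime where property (R) is intended to be used.
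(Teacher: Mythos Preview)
Your proposal is correct and follows essentially the same route as the paper: the paper takes the specific sequence $r_n = \inf f + \tfrac{1}{n}$, observes that the corresponding sublevel sets are nonempty, convex, closed, bounded (via coercivity) and decreasing, and then invokes property (R) to obtain a point in their intersection, which is a minimizer. Your version is simply a more detailed write-up of the same argument with a general sequence $r_n \downarrow m$.
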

\begin{proof}
	Set $C_n=\{x\in X: f(x)\leq {\rm inf}_{x\in X}f(x)+\frac{1}{n}\}$, then $C_n, n=1,2,\cdots$ are convex, closed, bounded and decreasing. By the assumption on $X$, $\cap_{n=1}^{\infty}C_n\neq\varnothing$. Therefore ${\rm Argmin} f\neq\varnothing$.
\end{proof}

\section{Proximal Point Algorithm}

 Throughout this section we assume $X$ is a Hadamard space, which is a unique geodesic metric space with property (R) (see \cite{kha-kha}). Using Proposition \ref{coercivity minimization}, we introduce the notion of resolvent for weakly convex and quasi-convex functions in CAT(0) spaces. Finally we study the proximal point algorithm for quasi-convex functions in non-positive curvature setting. Our results extend several results in the literature of  convex and quasi-convex functions in CAT(0) spaces and Hadamard manifolds (see \cite{ah-kh1, bac1, bac2, bfo, fer-oli, llmw, llm, pqo, tzh, tx, wlml}).

Theorem \ref{existence resolvent} and Proposition \ref{resolvent domain} were essentially mentioned in \cite{bac2} for convex functions.

\begin{theorem}\label{existence resolvent}
	Let $X$ be a CAT(0) space. Suppose $f:X\rightarrow]-\infty,+\infty]$ is $\alpha$-weakly convex. Then for each $x\in X$ and $\lambda<\frac{1}{2\alpha}$, the function $y\mapsto f(y)+\frac{1}{2\lambda}d^2(x,y)$ has a unique minimum point, which we denote it by $J_{\lambda}x$.
\end{theorem}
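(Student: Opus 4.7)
The plan is to prove that $g(y):=f(y)+\frac{1}{2\lambda}d^2(x,y)$ is strongly convex and then extract uniqueness directly and existence via Proposition~\ref{coercivity minimization}.

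For strong convexity, I would combine the $\alpha$-weak convexity of $f$ with the defining CAT(0) inequality that $d^2(x,\cdot)$ is $1$-strongly convex. Adding the first inequality to $\frac{1}{2\lambda}$ times the second, the cross terms in $d^2(y_1,y_2)$ combine with coefficient $\alpha-\frac{1}{2\lambda}$, which is strictly negative under the hypothesis $\lambda<\frac{1}{2\alpha}$. Thus $g$ is $\beta$-strongly convex with $\beta:=\frac{1}{2\lambda}-\alpha>0$. Uniqueness is then free: for two distinct minimizers $y_1\neq y_2$, strong convexity evaluated at their midpoint $m$ yields $g(m)<\min g$, a contradiction.

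For existence I would apply Proposition~\ref{coercivity minimization}. Since $g$ is convex (hence quasi-convex) and lsc (sum of the lsc $f$ and the continuous $\frac{1}{2\lambda}d^2(x,\cdot)$), and the Hadamard space $X$ has property~(R), the missing ingredients are lower boundedness and coercivity of $g$. This is the main obstacle, because $\alpha$-weak convexity alone does not force $f$ to be bounded below. The key auxiliary observation is that $\tilde f(y):=f(y)+\alpha d^2(x,y)$ is in fact convex: the $+\alpha t(1-t)d^2(y_1,y_2)$ surplus in the weak convexity of $f$ cancels exactly against the $-\alpha t(1-t)d^2(y_1,y_2)$ produced by $\alpha$ times the $1$-strong convexity of $d^2(x,\cdot)$. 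Rewriting $g=\tilde f+\beta d^2(x,\cdot)$, the quadratic growth of $\beta d^2(x,\cdot)$ together with the standard fact that a convex lsc proper function on a Hadamard space is bounded below on bounded sets deliver both coercivity and lower boundedness, completing the existence proof. Alternatively, one can bypass Proposition~\ref{coercivity minimization} entirely and reduce directly to the convex case of \cite{bac2}: writing $g(y)=\tilde f(y)+\frac{1}{2\lambda'}d^2(x,y)$ with $\lambda':=\frac{\lambda}{1-2\alpha\lambda}>0$ exhibits $g$ as the resolvent objective for the convex function $\tilde f$ at parameter $\lambda'$.
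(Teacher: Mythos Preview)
Your approach is essentially the same as the paper's: both show that $g$ is $(\tfrac{1}{2\lambda}-\alpha)$-strongly convex, deduce strict convexity and coercivity, and conclude existence and uniqueness via Proposition~\ref{coercivity minimization}. Your argument is in fact more careful than the paper's three-line proof, since you explicitly justify lower boundedness and coercivity through the auxiliary convex function $\tilde f=f+\alpha d^2(x,\cdot)$ (or, equivalently, by reducing to the convex resolvent of \cite{bac2}), whereas the paper simply asserts that strong convexity yields these properties.
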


\begin{proof}
	Take $g(y)=f(y)+\frac{1}{2\lambda}d^2(x,y)$, then $g$ is $(\frac{1}{2\lambda}-\alpha)$-strongly convex. Then $g$ is coercive and strictly convex and therefore has a unique minimum point.
\end{proof}

\begin{proposition}\label{resolvent domain}
	Let $f:X\rightarrow]-\infty,+\infty]$ be $\alpha$-weakly convex and lsc. Then for each $\lambda>0$, $J_{\lambda}x\in D(|\partial f|)$ and consequently belongs to $D(f)$.
\end{proposition}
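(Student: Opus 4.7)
The plan is to use the fact that $y_0 := J_\lambda x$ is a minimizer of $g(y) = f(y) + \frac{1}{2\lambda}d^2(x,y)$, translate this minimization inequality into a one-sided bound on increments of $f$ near $y_0$, and then insert this bound into the definition of the slope to see that $|\partial f|(y_0)$ is finite. The inclusion into $D(f)$ will then be free of charge from Definition \ref{scalar subdifferential}, which records that $D(|\partial f|) \subset D(f)$.

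Concretely, I would first write down that for every $y \in X$,
\begin{equation*}
f(y_0) + \frac{1}{2\lambda} d^2(x, y_0) \leq f(y) + \frac{1}{2\lambda} d^2(x, y),
\end{equation*}
which rearranges to $f(y_0) - f(y) \leq \frac{1}{2\lambda}\bigl[d^2(x,y) - d^2(x, y_0)\bigr]$. Factoring the right-hand side and using the triangle inequality $|d(x,y) - d(x,y_0)| \leq d(y, y_0)$ yields
\begin{equation*}
f(y_0) - f(y) \leq \frac{1}{2\lambda}\bigl[d(x,y) + d(x,y_0)\bigr]\, d(y, y_0).
\end{equation*}

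Dividing by $d(y, y_0)$ and taking the limit superior as $y \to y_0$ (the other side of the maximum is nonpositive and hence does not contribute to the $\limsup$), one obtains
\begin{equation*}
|\partial f|(y_0) = \limsup_{y \to y_0} \frac{\max\{f(y_0) - f(y),\, 0\}}{d(y_0, y)} \leq \frac{d(x, y_0)}{\lambda} < +\infty,
\end{equation*}
so $y_0 \in D(|\partial f|)$, and consequently $y_0 \in D(f)$ by the inclusion recorded after Definition \ref{scalar subdifferential}.

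There is no real obstacle here, but the one point requiring mild care is the handling of the maximum in the definition of $|\partial f|$: when $f(y) \geq f(y_0)$ the quantity inside the $\limsup$ is zero and the bound is automatic, so the estimate derived from the minimizing property of $y_0$ only needs to control the case $f(y) < f(y_0)$, which is exactly what the inequality above provides. Notice also that neither lower semicontinuity nor the restriction $\lambda < \frac{1}{2\alpha}$ is needed in the argument itself once $J_\lambda x$ is known to exist via Theorem \ref{existence resolvent}.
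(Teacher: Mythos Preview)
Your argument is correct and is essentially identical to the paper's own proof: both start from the minimizing inequality for $J_\lambda x$, factor $d^2(x,y)-d^2(x,J_\lambda x)$ and use the reverse triangle inequality to obtain the bound $\frac{1}{2\lambda}\bigl(d(x,y)+d(x,J_\lambda x)\bigr)d(y,J_\lambda x)$, then divide and take $\limsup$. Your additional remarks on the role of the maximum and on which hypotheses are actually used are accurate.
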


\begin{proof}
 By the definition of $J_{\lambda}x$, for each $y\in X$, $$f(J_{\lambda}x)-f(y)\leq\frac{1}{2\lambda}(d^2(y,x)-d^2(x,J_{\lambda}x))\leq\frac{1}{2\lambda}d(J_{\lambda}x,y)(d(y,x)+d(x,J_{\lambda}x)).$$
	Then we have
	$$0\leq\frac{max\{f(J_{\lambda}x)-f(y),0\}}{d(J_{\lambda}x,y)}\leq\frac{1}{2\lambda}(d(y,x)+d(x,J_{\lambda}x))$$
	Taking limsup when $y\rightarrow J_{\lambda}x$, we get
	$|\partial f|(J_{\lambda}x)<+\infty$ as desired.
\end{proof}

\begin{lemma} \label{projection resolvent}
	Suppose that $f:X\rightarrow]-\infty,+\infty]$ is $\alpha$-weakly convex, quasi-convex and lsc. Then for each $x\in X$ and each $z\in X$ such that $f(z)\leq f(J_{\lambda}x)$, $\langle\overset{\rightarrow}{J_{\lambda}xz},\overset{\rightarrow}{J_{\lambda}xx}\rangle\leq0$,
	equivalently $P_{\{z:\ f(z)\leq f(J_{\lambda}x)\}}x=J_{\lambda}x$.
\end{lemma}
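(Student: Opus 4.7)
The plan is to exploit the minimality of $J_\lambda x$ for the perturbed functional $g(y)=f(y)+\tfrac{1}{2\lambda}d^2(x,y)$ against variations along the geodesic from $J_\lambda x$ toward a fixed $z$ in the sub-level set $C:=\{y\in X:f(y)\le f(J_\lambda x)\}$. First I would note that quasi-convexity and lsc make $C$ convex and closed, so the point $y_t:=(1-t)J_\lambda x\oplus t z$ lies in $C$ for every $t\in[0,1]$, and by quasi-convexity $f(y_t)\le\max\{f(J_\lambda x),f(z)\}=f(J_\lambda x)$.

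Next, since $J_\lambda x$ minimizes $g$, I would compare $g(J_\lambda x)\le g(y_t)$; after canceling the $f$-values using the quasi-convexity bound, this reduces to the purely metric inequality
\begin{equation*}
d^2(x,J_\lambda x)\le d^2(x,y_t).
\end{equation*}
Now I would invoke the defining CAT(0) inequality, i.e.\ $1$-strong convexity of $d^2(x,\cdot)$, to obtain
\begin{equation*}
d^2(x,y_t)\le(1-t)d^2(x,J_\lambda x)+t\,d^2(x,z)-t(1-t)d^2(J_\lambda x,z).
\end{equation*}
Subtracting $d^2(x,J_\lambda x)$, dividing by $t>0$, and letting $t\downarrow 0$ yields the Pythagorean-type estimate
\begin{equation*}
d^2(x,J_\lambda x)+d^2(J_\lambda x,z)\le d^2(x,z).
\end{equation*}

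Finally, I would translate this into the quasi-linearization statement. By the definition of $\langle\cdot,\cdot\rangle$ of Berg--Nikolaev,
\begin{equation*}
\langle\overset{\rightarrow}{J_\lambda x\,z},\overset{\rightarrow}{J_\lambda x\,x}\rangle=\tfrac{1}{2}\bigl(d^2(J_\lambda x,x)+d^2(z,J_\lambda x)-d^2(z,x)\bigr),
\end{equation*}
which is $\le 0$ by the previous inequality; this is exactly the desired ``obtuse angle'' condition. The equivalence with $P_C x=J_\lambda x$ is then immediate: the displayed Pythagorean inequality, valid for every $z\in C$, says $J_\lambda x$ is the (unique) nearest point of $C$ to $x$. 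The only delicate point is arranging the geodesic comparison so that the $f$-term is dominated rather than needing to be controlled via $\alpha$-weak convexity; the quasi-convexity hypothesis is precisely what makes this work, while $\alpha$-weak convexity enters only implicitly through Theorem \ref{existence resolvent} to guarantee that $J_\lambda x$ exists (hence the tacit restriction $\lambda<\tfrac{1}{2\alpha}$).
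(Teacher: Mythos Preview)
Your proof is correct and follows essentially the same route as the paper: test the minimality of $g$ at a geodesic variation between $J_\lambda x$ and $z$, cancel the $f$-terms via quasi-convexity, apply the CAT(0) strong convexity of $d^2(x,\cdot)$, and pass to the limit to obtain the Pythagorean inequality equivalent to $\langle\overrightarrow{J_\lambda x\,z},\overrightarrow{J_\lambda x\,x}\rangle\le 0$. The only cosmetic difference is the parametrization of the geodesic (you write $y_t=(1-t)J_\lambda x\oplus tz$ and send $t\downarrow 0$, whereas the paper writes $u=tJ_\lambda x\oplus(1-t)z$ and sends $t\to 1$), which amounts to the same limit.
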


\begin{proof}
	By Theorem \ref{existence resolvent} for each $x\in X$ and $\lambda<\frac{1}{2\alpha}$ $$f(J_{\lambda}x)+\frac{1}{2\lambda}d^2(x,J_{\lambda}x)\leq f(u)+\frac{1}{2\lambda}d^2(u,x),\ \ \forall u\in X$$
	Now for each $z\in X$ such that $f(z)\leq f(J_{\lambda}x)$ by taking $u=tJ_{\lambda}x\oplus(1-t)z$, and using quasi-convexity of $f$ and strong convexity of $d^2(\cdot,x)$, we get
	$$d^2(x,J_{\lambda}x)-d^2(z,x)+td^2(J_{\lambda}x,z)\leq0$$
	for each $0<t<1$. Now the result is concluded by letting $t\rightarrow1$ and the definition of quasi-inner product.
\end{proof}

The proximal point algorithm for each $\alpha$-weakly and quasi-convex function is defined by
\begin{equation}x_{n+1}=J_{\lambda_n}x_n\label{ppa}\end{equation}
Tanks to idea of Goudou and Munier \cite{gou-mun} to prove weak convergence of gradient flow of a quasi-convex function, we prove $\Delta$-convergence of the sequence given by \eqref{ppa} for continuous quasi-convex and weakly convex functions.
\begin{theorem}\label{weak convergence to critical point}
	Suppose $f:X\rightarrow]-\infty,+\infty]$ is a $\alpha$-weakly convex and quasi-convex function on a Hadamard space $X$, which is continuous in $D(f)$. If ${\rm Argmin} f\neq\varnothing$, then for each $\bar{\lambda}\leq\lambda_n\leq\frac{1}{2\alpha}$, the sequence given by \eqref{ppa} is $\Delta$-convergent to a critical point of $f$, which is a minimum point when $f$ is pseudo-convex.
\end{theorem}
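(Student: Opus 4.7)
\emph{Step 1 (monotonicity, Fej\'er property, asymptotic regularity).} The plan is to exploit the defining inequality of the resolvent, namely $f(x_{n+1})+\tfrac{1}{2\lambda_n}d^2(x_n,x_{n+1})\le f(u)+\tfrac{1}{2\lambda_n}d^2(x_n,u)$ for every $u\in X$. Choosing $u=x_n$ shows that $\{f(x_n)\}$ is non-increasing; since $\operatorname{Argmin}f\neq\varnothing$ it is bounded below, so $f(x_n)\downarrow\ell\ge\inf f$. For any $z$ with $f(z)\le\ell$ one then has $f(z)\le f(x_{n+1})$ for every $n$, so Lemma~\ref{projection resolvent}, unfolded via the quasi-linearization identity, yields
\begin{equation*}
d^2(x_{n+1},z)+d^2(x_n,x_{n+1})\le d^2(x_n,z).
\end{equation*}
Hence $\{x_n\}$ is Fej\'er monotone with respect to the sub-level set $\{f\le\ell\}$, in particular bounded (take $z\in\operatorname{Argmin}f$), and telescoping gives $\sum_n d^2(x_n,x_{n+1})<\infty$, so $d(x_n,x_{n+1})\to 0$. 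Re-running the estimate from the proof of Proposition~\ref{resolvent domain} with $\lambda=\lambda_n$ and $x=x_n$ then delivers $|\partial f|(x_{n+1})\le\frac{1}{\lambda_n}d(x_n,x_{n+1})\le\frac{1}{\bar\lambda}d(x_n,x_{n+1})\to 0$, so the slope vanishes along the iterates.

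\emph{Step 2 (criticality of $\Delta$-cluster points).} Since $\{x_n\}$ is bounded in the Hadamard space $X$, it admits a $\Delta$-convergent subsequence $x_{n_k}\overset{\triangle}{\longrightarrow}\bar x$. Because every lsc quasi-convex function is $\Delta$-lsc, I obtain $f(\bar x)\le\liminf_k f(x_{n_k})=\ell$, hence $\bar x\in\{f\le\ell\}$. The decisive point is to prove $|\partial f|(\bar x)=0$. I would argue by contradiction: if $|\partial f|(\bar x)>0$, the definition of the slope produces a point $y$ arbitrarily close to $\bar x$ in the metric with $f(y)<f(\bar x)\le\ell\le f(x_n)$ and a uniform positive lower bound on $(f(\bar x)-f(y))/d(\bar x,y)$. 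The inclusion $y\in\{f\le f(x_{n+1})\}$ together with Lemma~\ref{projection resolvent} forces $\{d(x_n,y)\}$ to be decreasing and to converge; combining this with the minimizer inequality tested against $u=y$, the continuity of $f$ on $D(f)$ (which gives $f(y)\to f(\bar x)$ as $y\to\bar x$ in the metric), and the slope decay of Step~1, the plan is to reach a contradiction. This is the main obstacle of the proof, since for quasi-convex functions the slope need not be $\Delta$-lower semicontinuous, so translating the ``almost criticality'' of the iterates $x_{n+1}$ into criticality of the $\Delta$-limit $\bar x$ uses the continuity hypothesis essentially.

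\emph{Step 3 (full $\Delta$-convergence and the pseudo-convex case).} Once every $\Delta$-cluster point of $\{x_n\}$ has been identified as a critical point lying in $\{f\le\ell\}$, the Fej\'er monotonicity with respect to $\{f\le\ell\}$ established in Step~1, combined with the standard Opial-type lemma valid in Hadamard spaces, yields $\Delta$-convergence of the full sequence $\{x_n\}$ to a single critical point $\bar x$. If in addition $f$ is pseudo-convex, the proposition from the preliminaries stating that every critical point of a proper pseudo-convex function is a minimum point identifies $\bar x$ with an element of $\operatorname{Argmin}f$, completing the proof.
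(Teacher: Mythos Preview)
Your Step~1 matches the paper's opening computations. The essential gap is in Step~2: you recognise it yourself (``this is the main obstacle''), but you do not actually close it, and I do not see how to close it along your route. You try to transfer the slope decay $|\partial f|(x_{n+1})\to 0$ to the $\Delta$-limit $\bar x$, but the slope is not $\Delta$-lower semicontinuous for quasi-convex $f$, and the contradiction argument you sketch---picking $y$ near $\bar x$ with $f(y)<f(\bar x)$ and invoking the resolvent inequality at $u=y$---only produces bounds of the type $\ell-f(y)\le \tfrac{1}{2\bar\lambda}\lim_n d^2(x_n,y)$, which carry no contradiction because $x_n$ is merely $\Delta$-close to $\bar x$, not metrically close. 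The continuity of $f$ at $\bar x$ that you invoke (``$f(y)\to f(\bar x)$ as $y\to\bar x$'') does not help here, since the iterates are not metrically converging to $\bar x$ in your argument.

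The paper resolves this by an entirely different mechanism: it splits into two cases on the value $\ell=\lim f(x_n)$. If $\ell=\inf f$, every $\Delta$-cluster point is a minimiser by $\Delta$-lsc, hence trivially critical, and Opial finishes. If $\ell>\inf f$, the continuity of $f$ is used \emph{at a minimiser $\tilde x$}, not at $\bar x$: it yields a ball $B_r(\tilde x)$ on which $f<\ell\le f(x_n)$, so by Lemma~\ref{projection resolvent} one has $\langle\overrightarrow{x_nx},\overrightarrow{x_nx_{n-1}}\rangle\le 0$ for every $x\in B_r(\tilde x)$. Choosing $x$ on the geodesic ray from $\tilde x$ parallel to $[x_{n-1},x_n]$ turns this into the telescoping estimate $r\,d(x_n,x_{n-1})\le d^2(x_{n-1},\tilde x)-d^2(x_n,\tilde x)$, which forces $\{x_n\}$ to be Cauchy. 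Thus in the only case where criticality is non-trivial, the paper first upgrades $\Delta$-convergence to strong convergence, after which passing to the limit in the resolvent inequality immediately gives $|\partial f|(\bar x)=0$. This parallel-ray argument is the missing idea in your proposal.
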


\begin{proof}
By the hypothesis
\begin{equation}\label{argmin method}
f(x_n)+\frac{1}{2\lambda_{n-1}}d^2(x_n,x_{n-1})\leq f(y)+\frac{1}{2\lambda_{n-1}}d^2(y,x_{n-1}),\ \ \ \forall y\in X
 \end{equation}
	Taking $y=x_{n-1}$, we get: $\{f(x_n)\}$ is nonincreasing. Now taking $y=t\tilde{x}\oplus(1-t)x_n$ in (\ref{argmin method}), where $\tilde{x}\in {\rm Argmin} f$ and using quasi-convexity of $f$, we get
	 $$f(x_n)+\frac{1}{2\lambda_{n-1}}d^2(x_n,x_{n-1})\leq f(x_n)+\frac{1}{2\lambda_{n-1}}td^2(\tilde{x},x_{n-1})$$$$+\frac{1}{2\lambda_{n-1}}(1-t)d^2(x_n,x_{n-1})-\frac{1}{2\lambda_{n-1}}t(1-t)d^2(x_n,\tilde{x})$$
	 By letting $t\rightarrow0$, we receive to
	 $$d^2(x_n,x_{n-1})-d^2(x_{n-1},\tilde{x})+d^2(x_n,\tilde{x})\leq0.$$
	 Therefore $\{d(x_n,\tilde{x})\}$ is nonincreasing and $\langle\overrightarrow{x_n\tilde{x}},\overrightarrow{x_nx_{n-1}}\rangle\leq0$.
	In the sequel, we distinguish two cases:\\
	 1) $\liminf f(x_n)=\inf f=f(\tilde{x})$. Since $x_n$ is bounded, there is a subsequence $x_{n_j}$ of $x_n$, which is $\Delta$-convergent to $\bar{x}$. By $\Delta$-lower semi-continuity of $f$, which is concluded by lower semi-continuity and quasi-convexity of $f$, we get:
	 $$f(\bar{x})\leq\liminf f(x_{n_j})=\lim f(x_n)=f(\tilde{x}).$$
	 By Opials lemma (see Lemma 2.1 \cite{ran-kha}, also \cite{bss}) $x_n$ $\Delta$-converges to $\bar{x}\in {\rm Argmin} f$.\\
	 2) $\liminf f(x_n)=\lim f(x_n)> f(\tilde{x})=\inf_{x\in X}f$. By continuity of $f$ there exists $r>0$ such that for each $x\in B_r(\tilde{x})$, $f(x_n)>f(x),\ \ \forall n\geq n_0>0$.
	 By taking $x=x_{n-1},\  J_{\lambda_{n-1}}x=x_n,\  \text{and}\ z=x$ in Lemma \ref{projection resolvent}, we get
	 $$\langle\overrightarrow{x_nx},\overrightarrow{x_nx_{n-1}}\rangle\leq0,\ \ \ \forall x\in B_r(\tilde{x})$$
	 By quasi-inner product properties
	 $$\langle\overrightarrow{\tilde{x}x},\overrightarrow{x_nx_{n-1}}\rangle\leq \langle\overrightarrow{\tilde{x}x_n},\overrightarrow{x_nx_{n-1}}\rangle=\frac{1}{2}(d^2(x_{n-1},\tilde{x})-d^2(x_n,\tilde{x})-d^2(x_n,x_{n-1})).$$ By Proposition 9.2.28 of \cite{bac2} there exists a geodesic ray from $\tilde{x}$ parallel to geodesic segment $[x_n,x_{n-1}]$. Take the point $x\in B_r(\tilde{x})$ in this geodesic ray such that $d(x,\tilde{x})=\frac{r}{2}$, then
	 $$d(x,\tilde{x})d(x_n,x_{n-1})=\langle\overrightarrow{\tilde{x}x},\overrightarrow{x_nx_{n-1}}\rangle$$ Therefore $$rd(x_n,x_{n-1})\leq d^2(x_{n-1},\tilde{x})-d^2(x_n,\tilde{x}).$$
	 It proves that $x_n$ is a Cauchy sequence and therefore $x_n\rightarrow \bar{x}$. Now by letting $n\rightarrow+\infty$ from \eqref{argmin method}, we have
	 $$f(\bar{x})-f(y)\leq\frac{1}{2\bar{\lambda}}d^2(y,\bar{x})$$
	 Therefore
	 $$\frac{f(\bar{x})-f(y)}{d(y,\bar{x})}\leq\frac{1}{2\bar{\lambda}}d(y,\bar{x})$$
	 	Then
	 	$$0\leq\limsup_{y\rightarrow\bar{x}}\frac{max\{f(\bar{x})-f(y),0\}}{d(y,\bar{x})}\leq0,$$
	 	which follows $|\partial f|(\bar{x})=0$.
\end{proof}

\begin{corollary}\label{strong convergence to critical point}
	Suppose $f:X\rightarrow]-\infty,+\infty]$ is a $\alpha$-weakly convex and quasi-convex function on a Hadamard space $X$, which is continuous in $D(f)$. If ${\rm int}({\rm Argmin}\ f)\neq\varnothing$ and $\bar{\lambda}\leq\lambda\leq\frac{1}{2\alpha}$, then the sequence given by \eqref{ppa} is strongly convergent to a critical point of $f$, which is a minimum point of $f$ if $f$ is pseudo-convex.
\end{corollary}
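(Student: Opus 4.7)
The strategy is to exploit the interior hypothesis to place the iteration directly into the second case of the proof of Theorem \ref{weak convergence to critical point}, where strong convergence was already obtained, without having to appeal to continuity of $f$ to locate a ball of control. Choose $\tilde x \in {\rm Argmin}\,f$ and $r > 0$ with $B_r(\tilde x) \subseteq {\rm Argmin}\,f$. From the proof of Theorem \ref{weak convergence to critical point} we already know $\{f(x_n)\}$ is nonincreasing and $\{d(x_n,\tilde x)\}$ is nonincreasing. If some iterate $x_{n_0}$ satisfies $f(x_{n_0}) = \inf f$, then the defining inequality of the resolvent
\[
f(x_{n_0+1}) + \tfrac{1}{2\lambda_{n_0}}\, d^2(x_{n_0+1}, x_{n_0}) \leq f(x_{n_0}) = \inf f,
\]
combined with $f(x_{n_0+1}) \geq \inf f$, forces $x_{n_0+1} = x_{n_0}$, so the iteration stabilizes at a minimum (hence critical) point and the corollary is trivial. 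We may therefore assume $f(x_n) > \inf f = f(x)$ for every $n \geq 1$ and every $x \in B_r(\tilde x)$.

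Under this assumption, for each such $x$ Lemma \ref{projection resolvent} (applied with $J_{\lambda_{n-1}} x_{n-1} = x_n$ and $z = x$) yields $\langle \overrightarrow{x_n x},\overrightarrow{x_n x_{n-1}}\rangle \leq 0$. Exactly as in the proof of Theorem \ref{weak convergence to critical point}, additivity of the quasi-linearization then gives
\[
\langle \overrightarrow{\tilde x x},\overrightarrow{x_n x_{n-1}}\rangle \leq \tfrac{1}{2}\bigl( d^2(x_{n-1},\tilde x) - d^2(x_n,\tilde x) - d^2(x_n, x_{n-1})\bigr).
\]
Choosing $x$ at distance $r/2$ from $\tilde x$ on the geodesic ray issuing from $\tilde x$ parallel to $[x_n, x_{n-1}]$ (which exists by Proposition 9.2.28 of \cite{bac2}) makes the left-hand side equal to $\tfrac{r}{2}\, d(x_n, x_{n-1})$, and clearing fractions yields
\[
r\, d(x_n, x_{n-1}) \leq d^2(x_{n-1},\tilde x) - d^2(x_n,\tilde x).
\]
Since the right-hand side telescopes and $\{d(x_n,\tilde x)\}$ is bounded below by zero, $\sum_n d(x_n, x_{n-1}) < \infty$, so $\{x_n\}$ is a Cauchy sequence in $X$ and converges strongly to some $\bar x$.

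To identify $\bar x$ as a critical point, I would pass to the limit in \eqref{argmin method}: lower semicontinuity of $f$ together with $x_n \to \bar x$ and $\lambda_{n-1} \geq \bar\lambda$ gives $f(\bar x) - f(y) \leq \tfrac{1}{2\bar\lambda}\, d^2(y, \bar x)$ for every $y \in X$, which forces $|\partial f|(\bar x) = 0$ by the same limsup computation used at the end of the proof of Theorem \ref{weak convergence to critical point}. When $f$ is pseudo-convex, the earlier proposition that every critical point of a pseudo-convex function is a minimum point upgrades $\bar x$ to a minimizer. The only genuinely delicate step is the opening dichotomy: it is what guarantees the strict gap $f(x_n) > f(x)$ across the \emph{entire} open ball $B_r(\tilde x)$ rather than only on the ball provided by continuity in Case 2 of Theorem \ref{weak convergence to critical point}; once that strict inequality is in hand, everything else is a direct re-run of Case 2.
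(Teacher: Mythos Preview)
Your proposal is correct and follows essentially the same route as the paper: pick $\tilde x$ in the interior of ${\rm Argmin}\,f$, get $f(x)\le f(x_n)$ for all $x\in B_r(\tilde x)$, and then rerun Case~2 of Theorem~\ref{weak convergence to critical point} verbatim to obtain the Cauchy estimate and the critical-point identification. Your opening dichotomy is harmless but unnecessary, since Lemma~\ref{projection resolvent} only requires the non-strict inequality $f(z)\le f(J_\lambda x)$; the paper simply uses $f(x)\le f(x_n)$ directly without splitting into cases.
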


\begin{proof}
Take $\tilde{x}\in {\rm Argmin}\ f$, since ${\rm int}({\rm Argmin}\ f)\neq\varnothing$ hence there exists $r>0$ such that $B_r(\tilde{x})\subseteq {\rm int}({\rm Argmin} \ f)$. Therefore for each $x\in B_r(\tilde{x})$, we have $f(x)\leq f(x_n)$ for all $n\in \mathbb{N}$. The rest of the proof is exactly similar to the proof of the second part of Theorem \ref{weak convergence to critical point}.
\end{proof}

We can reduce continuity assumption to lower semi continuity in Theorem \ref{weak convergence to critical point} but we obtain only $\Delta$-convergence to an element of $\{x\in X:\ f(x)\leq f(x_n),\ n\in \mathbb{N}\}$.

\begin{theorem}\label{delta-convergent}
	Suppose $f:X\rightarrow]-\infty,+\infty]$ is a proper, lsc, $\alpha$-weakly convex and quasi-convex function on a Hadamard space $X$ with ${\rm Argmin} f\neq\varnothing$. Then for each $\bar{\lambda}\leq\lambda_n\leq\frac{1}{2\alpha}$, the sequence given by \eqref{ppa} is $\Delta$-convergent to an element $\bar{x}$ of $\{x\in X:\ f(x)\leq f(x_n),\ n\in \mathbb{N}\}$.
\end{theorem}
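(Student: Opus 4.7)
My plan is to reuse the Fej\'er-type computation from the proof of Theorem \ref{weak convergence to critical point} but to isolate it from its dependence on $\mathrm{Argmin}\, f$, replacing that set with the (a priori larger) target set $S := \{x \in X : f(x) \leq f(x_n)\ \text{for all } n \in \mathbb{N}\}$. Since continuity is no longer available, the strong-convergence alternative (Case 2 of the earlier proof) cannot be run; instead, the conclusion must come from a subsequential $\Delta$-cluster-point analysis combined with Opial's lemma. The key ingredient that survives the weakening of hypotheses is that $f$, being lsc and quasi-convex, is automatically $\Delta$-lsc (as noted after the definition of lsc).

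\textbf{Step 1 (Fej\'er monotonicity on $S$).} Starting from the resolvent inequality
\begin{equation*}
f(x_n) + \tfrac{1}{2\lambda_{n-1}} d^2(x_n, x_{n-1}) \leq f(y) + \tfrac{1}{2\lambda_{n-1}} d^2(y, x_{n-1}), \quad \forall y \in X,
\end{equation*}
setting $y = x_{n-1}$ gives that $\{f(x_n)\}$ is nonincreasing. For any $p \in S$, substituting $y = tp \oplus (1-t)x_n$, using quasi-convexity to bound $f(y) \leq \max\{f(p), f(x_n)\} = f(x_n)$, and using $1$-strong convexity of $d^2(\cdot, x_{n-1})$, then letting $t \to 0^+$, produces
\begin{equation*}
d^2(x_n, x_{n-1}) + d^2(x_n, p) \leq d^2(p, x_{n-1}),
\end{equation*}
exactly as in Theorem \ref{weak convergence to critical point}. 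Hence $\{d(x_n, p)\}$ is nonincreasing for every $p \in S$; in particular, taking any $\tilde{x} \in \mathrm{Argmin}\, f \subseteq S$ shows $\{x_n\}$ is bounded.

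\textbf{Step 2 ($\Delta$-convergence via Opial).} Boundedness of $\{x_n\}$ and the Hadamard hypothesis guarantee that every subsequence has a $\Delta$-convergent sub-subsequence. If $x_{n_j} \overset{\triangle}{\longrightarrow} \bar{x}$, then $\Delta$-lsc of $f$ combined with convergence of $\{f(x_n)\}$ yields
\begin{equation*}
f(\bar{x}) \leq \liminf_{j} f(x_{n_j}) = \lim_n f(x_n) \leq f(x_n) \quad \forall n,
\end{equation*}
so $\bar{x} \in S$. Applying Step 1 with $p = \bar{x}$, the sequence $\{d(x_n, \bar{x})\}$ is nonincreasing, hence convergent. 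Opial's lemma (Lemma 2.1 of \cite{ran-kha}) then forces the $\Delta$-cluster point to be unique and the whole sequence to satisfy $x_n \overset{\triangle}{\longrightarrow} \bar{x} \in S$.

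The main point to verify carefully, and the only real obstacle, is the structural observation in Step 1: the Fej\'er computation performed in Theorem \ref{weak convergence to critical point} for $\tilde{x} \in \mathrm{Argmin}\, f$ uses only the comparison $f(\tilde{x}) \leq f(x_n)$ and nowhere invokes minimality. Once this is isolated, the same identity simultaneously gives boundedness (via any $p \in \mathrm{Argmin}\, f$) and the hypothesis $d(x_n, \bar{x}) \to \ell(\bar{x})$ required by Opial's lemma for every $\Delta$-cluster point $\bar{x}$, which is precisely what promotes subsequential $\Delta$-convergence to the full sequence in the absence of continuity.
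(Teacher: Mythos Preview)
Your proof is correct and follows essentially the same route as the paper: Fej\'er monotonicity of $\{x_n\}$ with respect to the set $C=\{x:f(x)\le f(x_n)\ \forall n\}$, followed by $\Delta$-lower semicontinuity to place every $\Delta$-cluster point in $C$, and Opial's lemma to conclude. The only cosmetic difference is that the paper obtains the Fej\'er inequality by invoking Lemma~\ref{projection resolvent} (which already packages the $y=tp\oplus(1-t)x_n$, $t\to 0$ computation as $\langle\overrightarrow{x_n z},\overrightarrow{x_n x_{n-1}}\rangle\le 0$), whereas you re-run that computation explicitly.
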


\begin{proof}
	Since ${\rm Argmin} f\neq\varnothing$, $C:=\{x\in X:\ f(x)\leq f(x_n),\ n\in \mathbb{N}\}\neq\varnothing$. For each $z\in C$, since $f(z)\leq f(x_n)$ by Lemma \ref{projection resolvent}, $\langle \overrightarrow{x_nz},\overrightarrow{x_nx_{n-1}}\rangle\leq0$, therefore $d(x_n,z)\leq d(x_{n-1},z)$, for all $n=1, 2, \cdots$. So $\lim_{n\rightarrow+\infty}d(x_n,z)$ exists. On the other hand if the subsequence $x_{n_j}$ of $x_n$ $\Delta$-converges to $\bar{x}$, by $\Delta$-lower semi-continuity of $f$, we get
	$$f(\bar{x})\leq\liminf f(x_{n_j})=\lim f(x_n)$$ and since by \eqref{argmin method} $f(x_n)$ is nonincreasing, $\bar{x}\in C$. Now the result is concluded by Opials lemma in CAT(0) spaces \cite{ran-kha} (see also \cite{bss}).
	\end{proof}
\begin{theorem}
Suppose that the assumptions of Theorem \ref{delta-convergent} are satisfied. If  for some $n>0$, $\{x\in X:\ f(x)\leq f(x_n)\}$ is compact, then $\{x_n\}$ converges strongly to an element of $C$, which is also a critical point of $f$. Moreover, if $f$ is pseudo-convex, then $\{x_n\}$ converges strongly to a minimum point of $f$.
\end{theorem}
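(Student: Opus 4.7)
The plan is to first upgrade the $\Delta$-convergence from Theorem~\ref{delta-convergent} to strong convergence using the extra compactness assumption, and then to pass to the limit in the resolvent inequality \eqref{argmin method} to identify the strong limit as a critical point of $f$.

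For the upgrade, I would first note that $\{f(x_n)\}$ is nonincreasing (this was already observed inside the proof of Theorem~\ref{weak convergence to critical point}), so for every $m\ge n$ the iterate $x_m$ lies in the compact set $\{x\in X:f(x)\le f(x_n)\}$. Therefore every subsequence of $\{x_m\}$ admits a strongly convergent further subsequence. Since strong convergence implies $\Delta$-convergence to the same point in a Hadamard space, and since by Theorem~\ref{delta-convergent} the whole sequence $\Delta$-converges to a single point $\bar x\in C$, any such sub-subsequential strong limit must coincide with $\bar x$. The standard subsequence criterion then forces $x_n\to\bar x$ strongly.

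To show $\bar x$ is a critical point, I would first establish $d(x_n,x_{n-1})\to 0$: taking $y=x_{n-1}$ in \eqref{argmin method} gives $\tfrac{1}{2\lambda_{n-1}}d^2(x_n,x_{n-1})\le f(x_{n-1})-f(x_n)$, whose right side tends to $0$ because $\{f(x_n)\}$ is nonincreasing and bounded below by $\inf f$, while $\lambda_{n-1}\le\tfrac{1}{2\alpha}$ keeps the coefficient bounded. Next, rewriting \eqref{argmin method} as
$$f(x_n)-f(y)\le \tfrac{1}{2\lambda_{n-1}}\bigl[d^2(y,x_{n-1})-d^2(x_n,x_{n-1})\bigr]\le \tfrac{1}{2\bar\lambda}\bigl[d^2(y,x_{n-1})-d^2(x_n,x_{n-1})\bigr]$$
(the second inequality valid for all sufficiently large $n$ once $y\ne\bar x$, since the bracket is eventually nonnegative), I would pass to the limit using $x_{n-1}\to\bar x$ and $d(x_n,x_{n-1})\to 0$ to obtain $\lim f(x_n)-f(y)\le\tfrac{1}{2\bar\lambda}d^2(y,\bar x)$. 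Because $\bar x\in C$, one has $f(\bar x)\le\lim f(x_n)$, so $f(\bar x)-f(y)\le\tfrac{1}{2\bar\lambda}d^2(y,\bar x)$ for every $y\in X$. Dividing by $d(y,\bar x)$ and letting $y\to\bar x$ yields $|\partial f|(\bar x)=0$, so $\bar x$ is a critical point of $f$. The pseudo-convex case then follows immediately from the earlier proposition asserting that every critical point of a pseudo-convex function is a minimum point.

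The main obstacle I foresee is the limit passage in \eqref{argmin method}: because $\{\lambda_{n-1}\}$ is only assumed to lie in $[\bar\lambda,\tfrac{1}{2\alpha}]$ and need not converge, one has to rely on the uniform upper bound $\tfrac{1}{2\lambda_{n-1}}\le\tfrac{1}{2\bar\lambda}$ rather than any genuine limit; and because lower semicontinuity applied on the left only produces $\lim f(x_n)$ instead of $f(\bar x)$, the inclusion $\bar x\in C$ furnished by Theorem~\ref{delta-convergent} is essential to close the resulting gap and obtain the slope bound.
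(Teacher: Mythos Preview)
Your proposal is correct and follows essentially the same path as the paper: compactness together with the $\Delta$-convergence from Theorem~\ref{delta-convergent} upgrades to strong convergence, and then a limit passage in \eqref{argmin method} gives the critical-point conclusion. The paper's limit passage is slightly leaner than yours --- it simply drops the nonnegative term $\tfrac{1}{2\lambda_{n-1}}d^2(x_n,x_{n-1})$ from the left of \eqref{argmin method} and invokes lower semicontinuity to get $f(x^*)\le\liminf f(x_n)$, so there is no need to verify $d(x_n,x_{n-1})\to 0$ or to worry about the sign of your bracketed expression.
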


\begin{proof}
Note that $\{f(x_n)\}$ is nonincreasing. Since $\{x\in X:\ f(x)\leq f(x_n)\}$ is compact for some $n>0$, hence there are a subsequence $\{x_{n_k}\}$ of $\{x_n\}$ and $x^*\in C$ such that $x_{n_k}\rightarrow x^*$. Now, since $f$ is lower semi-continuous, we have:
$$f(x^*)\leq\liminf f(x_{n_k})=\lim f(x_n).$$
By Theorem \ref{delta-convergent} $x^*=\bar{x}\in C$, where $\bar{x}$ is the $\Delta$-limit of $x_n$ by Theorem \ref{delta-convergent}. Since By the proof of Theorem \ref{delta-convergent}, $\lim_{n\rightarrow+\infty}d(x_n,x^*)$ exists, hence $x_n\rightarrow x^*\in C$. By taking liminf from \eqref{argmin method}, we get
$$f(x^*)\leq f(y)+\frac{1}{2\bar{\lambda}}d^2(y,x^*)$$
which implies that $x^*$ is a critical point of $f$.
\end{proof}	

	The following lemma is elementary and we cancel the proof.
	
	\begin{lemma}\label{real sequence}
		Let $\{a_n\}$ and $\{b_n\}$ be two positive sequences such that $\{a_n\}$ is nonincreasing and convergent to zero. If $\sum_{n=1}^{\infty}a_nb_n<+\infty$, then $(\sum_{k=1}^nb_k)a_n\rightarrow0$ as $n\rightarrow+\infty$.
	\end{lemma}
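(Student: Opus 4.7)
The plan is to split the partial sum $S_n := \sum_{k=1}^n b_k$ at a cutoff index $N$ chosen so that the tail of the convergent series $\sum a_k b_k$ is arbitrarily small, and then estimate the two resulting pieces $a_n\sum_{k=1}^N b_k$ and $a_n\sum_{k=N+1}^n b_k$ separately, exploiting the two distinct hypotheses on $\{a_n\}$: convergence to zero and monotonicity.

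Given $\varepsilon > 0$, I would first use convergence of $\sum_{n=1}^{\infty} a_n b_n$ to choose $N$ with $\sum_{k=N+1}^{\infty} a_k b_k < \varepsilon$. For every $n > N$ the monotonicity of $\{a_n\}$ gives $a_n \leq a_k$ whenever $k \leq n$, so multiplying by the positive number $b_k$ yields $a_n b_k \leq a_k b_k$. Summing over $k = N+1, \ldots, n$ produces the key bound $a_n \sum_{k=N+1}^n b_k \leq \sum_{k=N+1}^n a_k b_k \leq \varepsilon$, uniformly in $n > N$. This handles the tail portion of $S_n$.

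For the initial block, the quantity $\sum_{k=1}^N b_k$ is a fixed finite number depending only on $N$, and since $a_n \to 0$ one can pick $N' \geq N$ so that $a_n \sum_{k=1}^N b_k < \varepsilon$ for all $n \geq N'$. Adding the two estimates then gives $a_n S_n < 2\varepsilon$ for every $n \geq N'$, which is precisely the desired conclusion $a_n S_n \to 0$.

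I do not anticipate any real obstacle here: this is a standard Kronecker-type argument, and its only subtlety is that the comparison $a_n b_k \leq a_k b_k$ for $k \leq n$ relies on both the positivity of $\{b_n\}$ and the monotonicity of $\{a_n\}$, so that no hypothesis in the statement is extraneous. The fact that the author calls the lemma elementary and omits the proof is consistent with this; the only care needed is the order of quantifiers — fix $N$ first using the tail, then fix $N'$ using $a_n \to 0$.
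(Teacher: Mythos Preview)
Your argument is correct: the split at a tail index $N$ followed by the estimate $a_n\sum_{k=N+1}^n b_k \le \sum_{k=N+1}^n a_k b_k$ (using $a_n\le a_k$ for $k\le n$ and $b_k>0$) and then $a_n\to 0$ on the fixed initial block is exactly the standard Kronecker-type proof. The paper itself omits the proof, calling the lemma elementary, so there is no approach to compare against; your write-up would serve perfectly well as the missing justification.
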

	
\begin{theorem}
	Suppose $f:X\rightarrow]-\infty,+\infty]$ is proper and $\alpha$-strongly quasi-convex with ${\rm Argmin} f\neq\varnothing$. If $\sum_{n=0}^{\infty}\lambda_n=+\infty$, then the sequence $\{x_n\}$ generated by \eqref{ppa} converges strongly to the unique element $\tilde{x}$ of ${\rm Argmin} f$; moreover $d(x_n,\tilde{x})=o((\sum_{k=0}^n\lambda_k)^{-\frac{1}{2}})$.
\end{theorem}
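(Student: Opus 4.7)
The plan is to derive a single descent inequality that combines Fej\'er monotonicity with a geometric contraction, and then extract the rate via Lemma \ref{real sequence}. I first remark that the minimizer is unique: if $\tilde x_1,\tilde x_2\in {\rm Argmin}\,f$, applying $\alpha$-strong quasi-convexity at the midpoint gives $f\bigl(\tfrac12 \tilde x_1 \oplus \tfrac12 \tilde x_2\bigr) \le \min f - \tfrac{\alpha}{4}\, d^2(\tilde x_1,\tilde x_2)$, forcing $d(\tilde x_1,\tilde x_2)=0$. Call the unique element $\tilde x$.

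Next, from the variational characterization of $x_{n+1}=J_{\lambda_n}x_n$, for every $y\in X$
$$f(x_{n+1}) + \tfrac{1}{2\lambda_n} d^2(x_{n+1}, x_n) \le f(y) + \tfrac{1}{2\lambda_n} d^2(y, x_n),$$
and I test with $y = t\tilde x \oplus (1-t) x_{n+1}$, $t\in(0,1)$. The $\alpha$-strong quasi-convexity of $f$, combined with $f(\tilde x)\le f(x_{n+1})$, gives $f(y)\le f(x_{n+1}) - \alpha t(1-t)\, d^2(\tilde x, x_{n+1})$; the $1$-strong convexity of $d^2(\cdot,x_n)$ (the CAT(0) inequality) gives $d^2(y,x_n) \le t\, d^2(\tilde x,x_n) + (1-t) d^2(x_{n+1},x_n) - t(1-t) d^2(\tilde x,x_{n+1})$. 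Substituting, cancelling the $f(x_{n+1})$ on both sides, dividing by $t>0$, and letting $t\to 0^+$, I expect to obtain the key inequality
$$(1+2\alpha\lambda_n)\, d^2(x_{n+1},\tilde x) + d^2(x_{n+1},x_n) \le d^2(x_n,\tilde x).$$

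From this both conclusions follow quickly. It shows $\{d(x_n,\tilde x)\}$ is nonincreasing and hence convergent to some $L\ge 0$; rewritten as $2\alpha\lambda_n d^2(x_{n+1},\tilde x) \le d^2(x_n,\tilde x) - d^2(x_{n+1},\tilde x)$ and telescoped, it yields $\sum_{n\ge 0}\lambda_n d^2(x_{n+1},\tilde x) \le \tfrac{1}{2\alpha} d^2(x_0,\tilde x) < +\infty$. Combined with $\sum\lambda_n=+\infty$ and the monotonicity of $\{d^2(x_{n+1},\tilde x)\}$, this forces $L=0$, giving $x_n\to\tilde x$ strongly. Finally, applying Lemma \ref{real sequence} with $a_n = d^2(x_{n+1},\tilde x)$ (nonincreasing, tending to $0$) and $b_n = \lambda_n$ yields $\bigl(\sum_{k=0}^n \lambda_k\bigr) d^2(x_{n+1},\tilde x) \to 0$, i.e.\ the announced rate $d(x_n,\tilde x)=o\bigl((\sum_{k=0}^n\lambda_k)^{-1/2}\bigr)$.

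The only delicate step is the derivation of the displayed key inequality: one must ensure that the $\max$ in $\alpha$-strong quasi-convexity collapses to $f(x_{n+1})$ via $f(\tilde x)\le f(x_{n+1})$, so that the $f$-terms cancel, and that the two independent contractions on $d^2(\tilde x, x_{n+1})$ (one from strong quasi-convexity, one from CAT(0) curvature) add coherently to produce the coefficient $(1+2\alpha\lambda_n)$. Once this bookkeeping is done, everything else is essentially routine.
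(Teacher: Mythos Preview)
Your proposal is correct and follows essentially the same route as the paper: test the proximal inequality at $y=t\tilde x\oplus(1-t)x_{n+1}$, combine $\alpha$-strong quasi-convexity of $f$ with the CAT(0) $1$-strong convexity of $d^2(\cdot,x_n)$, divide by $t$ and let $t\to0$, then telescope and invoke Lemma~\ref{real sequence}. Your key inequality is in fact slightly sharper than the paper's stated one (you retain the $d^2(x_{n+1},x_n)$ term and the factor $2\alpha\lambda_n$ rather than $\alpha\lambda_{n-1}$), and you also supply the uniqueness argument for $\tilde x$, which the paper leaves implicit.
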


\begin{proof}
	Let $\tilde{x}$ be the unique element of ${\rm Argmin} f$. By \eqref{ppa}, $\alpha$-strong quasi-convexity of $f$ and 1-strong convexity of $d^2(x,\cdot)$, we have
	$$f(x_n)+\frac{1}{2\lambda_{n-1}}d^2(x_n,x_{n-1})\leq f(t\tilde{x}\oplus(1-t)x_n)+\frac{1}{2\lambda_{n-1}}d^2(t\tilde{x}\oplus(1-t)x_n,x_{n-1})$$
	$$\leq f(x_n)-\alpha t(1-t)d^2(x_n,\tilde{x})+\frac{t}{2\lambda_{n-1}}d^2(x_{n-1},\tilde{x})+\frac{1-t}{2\lambda_{n-1}}d^2(x_n,x_{n-1})-\frac{t(1-t)}{2\lambda_{n-1}}d^2(x_n,\tilde{x})$$
	By substituting terms and letting $t\rightarrow0$, we get
	$$\alpha\lambda_{n-1}d^2(x_n,\tilde{x})\leq d^2(x_{n-1},\tilde{x})-d^2(x_n,\tilde{x}).$$
	Then summing up the recent inequality from $n=1$ to $\infty$, we arrive to
	\begin{equation}
	\alpha\sum_{n=1}^{\infty}\lambda_{n-1}d^2(x_n,\tilde{x})\leq d^2(x_0,\tilde{x})<+\infty\label{sum}
	\end{equation}
	Since $\sum_{n=0}^{\infty}\lambda_n=+\infty$, we get: $\liminf_{n\rightarrow+\infty}d^2(x_n,\tilde{x})=0$ and since $d^2(x_n,\tilde{x})$ is nonincreasing, we obtain the strong convergence of $x_n$ to $\tilde{x}$. The rate of convergence is concluded by \eqref{sum} and Lemma \ref{real sequence}.
\end{proof}

\end{document}